\documentclass[12pt]{article}

\usepackage{algorithm,algpseudocode}
\usepackage{amscd,amsfonts,amsopn,amssymb,amstext}
\usepackage{appendix}
\usepackage{booktabs}
\usepackage{color}
\usepackage{fullpage}
\usepackage{graphicx,graphics,psfrag}
\usepackage{latexsym,enumerate}
\usepackage{multirow}
\usepackage[square]{natbib}
\setcitestyle{numbers,square}
\usepackage{setspace}
\usepackage[T1]{fontenc}
\usepackage{times}
\usepackage{url}
\usepackage{bm}

\makeatletter
\renewcommand\@biblabel[1]{#1.}
\makeatother
\newtheorem{theorem}{Theorem}
\newenvironment{proof}{{\bf Proof:\ \ }}{\qed}
\newcommand{\qed}{\rule{0.5em}{1.5ex}}
\title{\Large \bf{Multiple integral representations of the Catalan's constant}}

\author{\normalsize
{\bf Emilio G\'omez-D\'eniz$\,^a$, Jos\'e Mar\'ia Sarabia$\,^b$}\\
{\small $\,^a$Department of Quantitative Methods in Economics and TIDES Institute}\\[-0.2cm]
{\small University of Las Palmas de Gran Canaria, Spain.}\\[-0.2cm]
{\small https://orcid.org/0000-0002-5072-7908. \url{emilio.gomez-deniz@ulpgc.es}.}  \\[-0.2cm]
{\small $\,^b$Department of Economics, University of Cantabria, Santander, Spain.}\\[-0.2cm]
{\small https://orcid.org/0000-0002-9619-4721. \url{jose.sarabia@unican.es}}\\[-0.2cm]
}

\date{}

\def \E{{\rm I\kern -2.2pt  E}}

\begin{document}

\maketitle

\begin{abstract}
\noindent In this paper, we present several novel integral representations of Catalan's constant. We begin by deriving an initial result expressed as a double integral. Subsequently, as a consequence of this result, we establish a general theorem that enables the representation of Catalan's constant in terms of a single integral. Finally, we provide a multiple integral representation of Catalan's constant in dimensions greater than or equal to two using the Lerch function. The results are accompanied by illustrative examples.

%A simple multiple integral representation of the Catalan's constant is provided. Several examples illustrate how to use the new result.

\medskip

\paragraph{Keywords:} {Catalan's constant, Cumulative distribution function, Lerch function, Multiple integral}

\medskip

\paragraph{MR (2010):}  11A99, 11M35, 11Z99

\end{abstract}

\medskip

\paragraph{Funding:} EGD was partially funded by grant PID2021-127989OB-I00 (Ministerio de
Econom\'ia y Competitividad, Spain). JMS acknowledges financial Support from the I+D+i project Ref. PID2024-156871NB-I00 financed by MICIU/AEI/10.13039/501100011033/FEDER,UE.

\medskip

\paragraph{Corresponding author:} Emilio G\'omez-D\'eniz. Department of Quantitative Methods in Economics and TIDES Institute. Campus Universitario de Tafira. Edificio de Ciencias Econ\'omicas y Empresariales, 35017.  University of Las Palmas de Gran Canaria, Las Palmas, Canary Islands, Spain. E-mail: {\tt emilio.gomez-deniz@ulpgc.es}

\newpage

\section{Introduction}

Catalan's constant, denoted as ${\bf G}$, was introduced by the mathematician Eug\`ene Charles Catalan and is defined by the alternating series,
\begin{eqnarray*}\label{f1}
{\bf G}=\sum_{n=0}^{\infty}\frac{(-1)^n}{(2n+1)^2},
\end{eqnarray*}
with an approximate value of ${\bf G}\approx 0.915965.$ The constant can also be expressed in terms of the definite integral,
\begin{eqnarray*}\label{f2}
{\bf G}=\int_{0}^{1}\frac{\tan^{-1}x}{x}dx,
\end{eqnarray*}
which was widely used by Ramanujan. The variety of integrals and series from which {\bf G} can be obtained is surprising. Two compilations that include an extensive list of formulas involving Catalan's constant can be found in \cite{Bradley2001} and \cite{Marichevetal2025}, this last reference from the Wolfram Mathworld website. Some additional references that include integrals and series leading to the Catalan's constant are \cite{Chu2023}, \cite{ferretietal_2020}, \cite{JamesonLord2017}, \cite{Mortini2024}, and \cite{Stewart2020} and more recently \cite{Safonova2025}.

The structure of this article is as follows. Section \ref{section2} introduces a representation of the Catalan constant expressed through double integrals, along with several illustrative examples. In Section \ref{section3}, a novel representation of the Catalan constant as a single integral is presented. Finally, Section \ref{section4} provides a general formulation of the Catalan constant in terms of multiple integrals, accompanied by representative examples.

\section{Representation in terms of double integrals}\label{section2}

The aim of this paper is based on the next theorem.

\begin{theorem}\label{th1}
Let $G_i:\mathbb{R}\to [0,1]$, $i=1,2$ be two right-continuous monotone non-decreasing functions satisfying $\lim_{x\to -\infty}G_i(x)=0$ and $\lim_{x\to\infty}G_i(x)=1$. If $G_i(x)=1-G_i(-x)$, $i=1,2$ and $a>0$ we have,
\begin{equation}\label{eq1}
{\bf G}=\int_{-a}^{a}\int_{-1/a}^{1/a}\frac{G_1(x_1)G_2(x_2)}{1+x_1^2x_2^2}\,dx_1\,dx_2,
\end{equation}
where ${\bf G}$ is the Catalan's constant.
\end{theorem}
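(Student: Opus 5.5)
The plan is to use the symmetry hypothesis $G_i(x)=1-G_i(-x)$ to remove $G_1$ and $G_2$ from the integral altogether. What remains is an elementary integral that reduces to the representation ${\bf G}=\int_0^1 x^{-1}\tan^{-1}x\,dx$ recalled in the Introduction.

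First I settle integrability. Each $G_i$ is monotone, hence Borel measurable, and bounded by $1$. The kernel $K(x_1,x_2)=1/(1+x_1^2x_2^2)$ is continuous and bounded by $1$ on the compact rectangle $[-1/a,1/a]\times[-a,a]$. So the integrand in (\ref{eq1}) is bounded and measurable on a set of finite measure, and Fubini/Tonelli justifies every splitting and reordering used below.

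Next I split the rectangle into its four quadrants. In each quadrant I use the reflections $x_1\mapsto -x_1$ and $x_2\mapsto -x_2$ to move the integration to $Q=[0,1/a]\times[0,a]$. The boundary lines $x_1=0$, $x_2=0$ have measure zero, so the value $G_i(0)=1/2$ plays no role. Since $K$ is even in each variable separately, the integral becomes
\[
\int_{0}^{a}\int_{0}^{1/a}\bigl[G_1(x_1)+G_1(-x_1)\bigr]\bigl[G_2(x_2)+G_2(-x_2)\bigr]\frac{dx_1\,dx_2}{1+x_1^2x_2^2}.
\]
To see this, expand the product into the four terms $G_1(\pm x_1)G_2(\pm x_2)$; these correspond exactly to the four quadrants. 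By hypothesis each bracket equals $1$, so the right-hand side of (\ref{eq1}) equals $\int_{0}^{a}\int_{0}^{1/a}\frac{dx_1\,dx_2}{1+x_1^2x_2^2}$.

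Finally I evaluate this integral, doing the $x_2$-integration first. For fixed $x_1>0$,
\[
\int_0^a\frac{dx_2}{1+x_1^2x_2^2}=\frac{\tan^{-1}(a x_1)}{x_1}.
\]
The resulting integrand in $x_1$ is bounded near $0$, where it tends to $a$. Substituting $t=ax_1$ maps $[0,1/a]$ onto $[0,1]$, and $dx_1/x_1=dt/t$, so the integral becomes $\int_0^1 t^{-1}\tan^{-1}t\,dt={\bf G}$. Note that the parameter $a$ disappears here, which is exactly why the limits $1/a$ and $a$ are paired in the statement.

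The main obstacle is keeping the reflection bookkeeping in the second step correct. The product of the two brackets must match precisely the four quadrant contributions after the changes of variables, and this works only because $K$ is even in each variable separately. The remaining steps are routine.
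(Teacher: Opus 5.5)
Your proof is correct, and it takes a different route from the paper's.

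The paper never eliminates $G_i$ pointwise. It integrates by parts to get the moment identity $\int_{-a}^{a}x^{2n}G(x)\,dx=a^{2n+1}/(2n+1)$, using that $G'$ is even and $G(a)+G(-a)=1$. It then multiplies two such identities, expands $1/(1+x_1^2x_2^2)=\sum_{n\ge0}(-1)^n(x_1x_2)^{2n}$, and interchanges sum and integral. This gives $\sum_{n\ge0}(-1)^n(ab)^{2n+1}/(2n+1)^2$, which at $b=1/a$ is the defining series of ${\bf G}$.

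Your reflection step uses the same symmetry, applied once to the whole integrand rather than moment by moment; indeed $\int_{-a}^{a}x^{2n}G(x)\,dx=\int_0^a x^{2n}[G(x)+G(-x)]\,dx$. It is the cleaner argument in two ways:
\begin{itemize}
\item You never need $G_i'$, whereas the paper's integration by parts tacitly assumes $G$ is absolutely continuous, or needs a Stieltjes reading.
\item You avoid interchanging an infinite series with the integral. The paper calls that step straightforward, but at $b=1/a$ the geometric series diverges at the corners $|x_1x_2|=1$, so a dominated-convergence argument is needed.
\end{itemize}
Since you only use $G_i(x)+G_i(-x)=1$ for almost every $x$, your argument also shows that monotonicity, the limits at $\pm\infty$ and right-continuity are irrelevant. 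It rigorously covers the Rademacher step function of Example~1, which actually violates $G(x)=1-G(-x)$ at $x=\pm1$. There $G(1)+G(-1)=3/2$, which is exactly the boundary bracket the paper's proof relies on when $a=1$.

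In return, the series route gives a mechanical path to Theorem~\ref{th2}. Weighting the moment identities by $(-1)^n(2n+1)^{r-2}$ produces the Lerch kernel immediately. In $r$ variables, your approach would need, after the same reflection, a Lerch-function analogue of $\frac{d}{dp}\tan^{-1}p=\frac{1}{1+p^2}$ to integrate out the variables one at a time.
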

\begin{proof} Let consider $\int_{-a}^{a}x^{2n}G(x)\,dx$ with $a>0$. Applying integration by parts, we get
\begin{eqnarray*}
\int_{-a}^{a}x^{2n}G(x)\,dx &=& \left.\frac{x^{2n+1}}{2n+1}G(x)\right|_{-a}^{a}-\frac{1}{2n+2}\int_{-a}^{a}x^{2n+1}G^{\prime}(x)\,dx\\
&=& \frac{a^{2n+1}}{2n+1}G(a)-\frac{(-a)^{2n+1}}{2n+1}G(-a)=\frac{a^{2n+1}}{2n+1}[G(a)+G(-a)]=\frac{a^{2n+1}}{2n+1},
\end{eqnarray*}
where we  have used that
\begin{eqnarray*}
\int_{-a}^{a}x^{2n+1}G^{\prime}(x)\,dx=0,
\end{eqnarray*}
because $G^{\prime}(x)$ is an even function and $x^{2n+1}$ is odd.

Thus, if $a,b>0$, we have,
\begin{equation}\label{cat}
\int_{-a}^{a}\int_{-b}^{b}(x_1 x_2)^{2n}G_1(x_1)G_2(x_2)\,dx_1\,dx_2=\frac{(a b)^{2n+1}}{(2n+1)^2}.
\end{equation}

Now, by multiplying on both sides of (\ref{cat}) by $(-1)^n$, $n=0,1,\dots$ and summing, straightforward computations provide that
\begin{eqnarray*}
\sum_{n=0}^{\infty}\frac{(-1)^n(a b)^{2n+1}}{(2n+1)^2} &=& \int_{-a}^{a}\int_{-b}^{b}\left(\sum_{n=0}^{\infty}(-1)^n(x_1 x_2)^{2n}\right)G_1(x_1)G_2(x_2)\,dx_1\,dx_2\\
&=& \int_{-a}^{a}\int_{-b}^{b}\frac{G_1(x_1)G_2(x_2)}{1+x_1^2x_2^2}\,dx_1\,dx_2.
\end{eqnarray*}

Taking now $b=1/a$ we get
\begin{eqnarray*}
\int_{-a}^{a}\int_{-1/a}^{1/a}\frac{G_1(x_1)G_2(x_2)}{1+x_1^2x_2^2}\,dx_1\,dx_2=\sum_{n=0}^{\infty}\frac{(-1)^n}{(2n+1)^2}={\bf G},
\end{eqnarray*}
where ${\bf G}$ is the Catalan's constant.
\end{proof}

\noindent \textbf{Remark 1} Note that the functions defined in Theorem \ref{th1} correspond to cumulative distribution functions (cdfs), which are widely used in probability theory. The additional condition in Theorem \ref{th1} allows us to conclude that the derivative of these functions is an even function.\\

\noindent \textbf{Remark 2} Additionally, it is merely to note that the value of $a$ in (\ref{eq1}) can be replaced for any positive real number, such as $\pi$, $e$, including the proper Catalan's constant. For example we have that
\begin{eqnarray*}
{\bf G}=\int_{-{\bf G}}^{{\bf G}}\int_{-1/{\bf G}}^{1/{\bf G}}\frac{G_1(x_1)G_2(x_2)}{1+x_1^2x_2^2}\,dx_1\,dx_2.
\end{eqnarray*}

\subsection{Double integral examples}

\noindent The following examples illustrate the relevance of Theorem \ref{th1}. We use Theorem \ref{th1} taking $a=1$ in all following cases.\\

\noindent{\bf Example 1: A first double integral.} Entry (40) in \cite{Bradley2001} establishes,
\begin{equation}\label{entry40}
{\bf G}=\int_{0}^{1}\int_{0}^{1}\frac{dxdy}{1+x^2y^2}.
\end{equation}

This formula can be proved directly from (\ref{eq1}) by considering the function (Rademacher distribution) defined as $G_1(x)=G_2(x)=\frac{1}{2}$ if $-1\le x< 1$, 0 if $x< -1$ and 1 if $x\ge 1$. Since $G_i(x)$, $i=1,2$ satisfy the conditions of Theorem \ref{th1}, we have
\begin{eqnarray*}
{\bf G}=\int_{-1}^{-1}\int_{-1}^{1}\frac{1/4}{1+x^2y^2}dxdy &=&  4\int_{0}^{1}\int_{0}^{1}\frac{1/4}{1+x^2y^2}dxdy\\
   &=& \int_{0}^{1}\frac{\tan^{-1}(y)}{y}dy={\bf G}.
\end{eqnarray*}
Note that (\ref{entry40}) can also be evaluated using Theorem 1 in Glasser (see \cite{Glasser2019}),
$$
\int_{0}^{1}\int_{0}^{1}\frac{dxdy}{1+x^2y^2}=-\int_{0}^{1}\frac{\ln x}{1+x^2}dx={\bf G},
$$
where this last integral corresponds to the entry (16) in \cite{Bradley2001}.\\

\noindent{\bf Example 2.} From (\ref{entry40}) and by making the change of variable $x=i z$, $y=i w$, where $i=\sqrt{-1}$, it is easy to get
\begin{eqnarray*}
{\bf G}=-\int_{0}^{1/i}\int_{0}^{1/i}\frac{dz\,dw}{1+z^2 w^2}.
\end{eqnarray*}\\

\noindent{\bf Example 3: A new double integral.} We have next nice double integral,
\begin{equation}\label{eqhs}
{\bf G}=\int_{-1}^{1}\int_{-1}^{1}\frac{4\tan^{-1}e^x\tan^{-1}e^y}{\pi^2(1+x^2y^2)}\,dx\,dy.
\end{equation}

Formula (\ref{eqhs}) is obtained using in Theorem \ref{th1} the functions $G_i(x)=\frac{2}{\pi}\tan^{-1}(e^x)$, with $x\in \mathbb{R}$, $i=1,2$, which correspond with the cdf of the hyperbolic secant distribution, according to \cite{Holst2013}.\\

\noindent{\bf Example 4: Catalan's constant and the $\Phi$ function.} If we consider the $\Phi(x)$ function,
$$
\Phi(x)=\frac{1}{\sqrt{2\pi}}\int_{-\infty}^{x}\exp(-t^2/2)\,dt,
$$
with $x\in\mathbb{R}$, corresponding to the cumulative distribution function of the standard normal distribution, using (\ref{th1}), we have
$$
{\bf G}=\int_{-1}^{1}\int_{-1}^{1}\frac{\Phi(x)\Phi(y)}{1+x^2y^2}\,dx\,dy.
$$

\noindent{\bf Example 5: Another double integral.} Applying Theorem \ref{th1} to the functions $G_1(x)=\frac{1}{2}+\frac{1}{\pi}\tan^{-1}x$ and $G_2(y)=\frac{1}{2}$, we obtain a new representation of the Catalan's constant,
\begin{eqnarray*}\label{entry46}
{\bf G}=\int_{-1}^{1}\int_{-1}^{1}\frac{\frac{1}{2}\left(\frac{1}{2}+\frac{1}{\pi}\tan^{-1}x\right)}{1+x^2y^2}\,dx\,dy.
\end{eqnarray*}

Now, we can write
$$
{\bf G}=\int_{-1}^{1}\int_{-1}^{1}\frac{1/4}{1+x^2y^2}\,dx\,dy+\int_{-1}^{1}\int_{-1}^{1}\frac{(1/2\pi)\tan^{-1}x}{1+x^2y^2}\,dx\,dy.
$$

Now, since the first of the integrals is the Catalan's constant, the value of the second of the integrals is zero,
$$\int_{-1}^{1}\int_{-1}^{1}\frac{\tan^{-1}x}{1+x^2y^2}\,dx\,dy=0.$$

The above integral can be written as the sum of four double integrals corresponding to the four quadrants. Using entry (46) in \cite{Bradley2001} we have,
$$
\int_{0}^{1}\int_{0}^{1}\frac{\tan^{-1}x}{1+x^2y^2}\,dx\,dy=\int_{0}^{1}\int_{-1}^{0}\frac{\tan^{-1}x}{1+x^2y^2}\,dx\,dy=
\frac{\pi{\bf G}}{2}-\frac{7}{8}\zeta(3),$$
where $\zeta(3)$ is the Ap\'ery constant. For the other two integrals, its value is the same and is equal to
$$
\int_{-1}^{0}\left(\int_{0}^{1}\frac{\tan^{-1}x}{1+x^2y^2}\,dy\right)\,dx=\int_{-1}^{0}\frac{(\tan^{-1}x)^2}{x}\,dx=-\frac{\pi{\bf G}}{2}+\frac{7}{8}\zeta(3).
$$

\noindent{\bf Example 6.} Let $G(x)=(\alpha^3+x^3)/(2\alpha^3)$, $\alpha\in\mathbb{R}-\{0\}$, which corresponds to the cdf of the U-quadratic distribution with domain in $(-\alpha,\alpha)$. Then, we have the following representation of the Catalan's constant,
\begin{eqnarray*}
{\bf G} = \frac{1}{4\alpha^6}\int_{-1}^{1}\int_{-1}^{1}\frac{(\alpha^3+x^3)(\alpha^3+y^3)}{1+x^2 y^2}\,dx\,dy,\quad \alpha\in\mathbb{R}-\{0\}.
\end{eqnarray*}

\section{A single integral representation of the Catalan's constant}\label{section3}

The following Theorem provides a new general representation of the Catalan constant in terms of a simple integral making use of the distribution functions, defined in Theorem \ref{th1}.

\begin{theorem}
We have,
\begin{equation}\label{eq2}
{\bf G}=\int_{-1}^{1}\frac{G(x)\tan^{-1}x}{x}\,dx,
\end{equation}
where $G(x)$ is a function satisfying the conditions of the Theorem \ref{th1}.
\end{theorem}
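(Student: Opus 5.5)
The plan is to reduce the single integral to Theorem \ref{th1} by integrating out one of the two variables. In Theorem \ref{th1} we take $a=1$, let $G_1=G$, and choose $G_2$ to be the Rademacher-type function of Example 1, namely $G_2(x)=\frac12$ on $[-1,1)$. The inner integral in $x_2$ can then be done explicitly. For fixed $x_1\neq 0$,
\begin{equation*}
\int_{-1}^{1}\frac{1/2}{1+x_1^2x_2^2}\,dx_2=\frac{1}{2}\cdot\frac{2\tan^{-1}x_1}{x_1}=\frac{\tan^{-1}x_1}{x_1},
\end{equation*}
using that $x_2\mapsto (1+x_1^2x_2^2)^{-1}$ is even and that $\int_0^1 dx_2/(1+x_1^2x_2^2)=\tan^{-1}(x_1)/x_1$. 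The point $x_2=1$, where $G_2$ jumps, has measure zero and does not matter. Substituting this into (\ref{eq1}) and applying Fubini (Tonelli) gives
\begin{equation*}
{\bf G}=\int_{-1}^{1}\frac{G(x_1)\tan^{-1}x_1}{x_1}\,dx_1,
\end{equation*}
which is (\ref{eq2}).

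There is also a direct check that avoids choosing a particular $G_2$. Write $G(x)=\frac12+H(x)$. The symmetry $G(x)=1-G(-x)$ makes $H$ odd. Since $\tan^{-1}(x)/x$ is even, the product $H(x)\tan^{-1}(x)/x$ is odd and integrates to zero over $[-1,1]$. What remains is
\begin{equation*}
\frac12\int_{-1}^{1}\frac{\tan^{-1}x}{x}\,dx=\int_0^1\frac{\tan^{-1}x}{x}\,dx={\bf G},
\end{equation*}
the classical integral recalled in the Introduction.

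The only delicate points are technical. First, the integrand $\tan^{-1}(x)/x$ extends continuously to $x=0$ with value $1$. Second, $G$ is bounded and measurable, since it is monotone. So every integrand involved is bounded on a compact domain, and the use of Fubini and the splitting of the integral are justified. I expect no real obstacle beyond stating these integrability facts. The second route is the cleaner one to present, and the first route shows how the result follows as a corollary of Theorem \ref{th1}.
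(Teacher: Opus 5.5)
Your first route is essentially the paper's proof; your second is genuinely different, and more elementary and more general.

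\textbf{First route.} The paper also takes $a=1$ in Theorem~\ref{th1} and integrates out an auxiliary variable. It uses the raised-cosine function $G_1(y)=\frac12\bigl(1+y+\frac1\pi\sin(\pi y)\bigr)$ on $[-1,1]$ rather than the Rademacher step function. The odd part of $G_1$ integrates to zero against the even kernel $(1+x^2y^2)^{-1}$, leaving $\tan^{-1}(x)/x$ exactly as in your computation.

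Your step function is simpler, but strictly it violates the hypotheses of Theorem~\ref{th1}. A right-continuous $G$ with $G(x)=1-G(-x)$ for all $x$ is forced to be continuous, since its left limit at $x_0$ is $1-G(-x_0)=G(x_0)$. Indeed $G(1)=1\neq\frac12=1-G(-1)$. All you really need from $G_2$ is $\int_{-1}^{1}x^{2n}\cdot\frac12\,dx=\frac{1}{2n+1}$, so say that rather than citing Theorem~\ref{th1} verbatim. The paper's continuous choice sidesteps this issue.

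\textbf{Second route.} You apply the same parity trick directly to $G-\frac12$ in the original variable, then close with the classical formula ${\bf G}=\int_0^1 x^{-1}\tan^{-1}x\,dx$. This buys three things.
\begin{enumerate}
\item It is self-contained and does not inherit the loose ends in the proof of Theorem~\ref{th1}: the appeal to $G'$, which need not exist, and the unexamined exchange of sum and integral.
\item It shows that monotonicity, right-continuity and the limits at $\pm\infty$ play no role. Any bounded measurable $G$ with $G(x)+G(-x)=1$ for almost every $x\in[-1,1]$ yields ${\bf G}$.
\item It makes plain that the identity is just the case $G\equiv\frac12$ in disguise. The same argument, run in each variable, reduces Theorem~\ref{th1} itself to its constant case $G_1\equiv G_2\equiv\frac12$.
\end{enumerate}
What the paper's route buys is presentational: it exhibits the statement as a corollary of the double-integral theorem.
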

\begin{proof}
To prove (\ref{eq2}) we consider the function $G_1(y)=\frac{1}{2}\left(1+y+\frac{1}{\pi}\sin(\pi y)\right)$, with $-1\le y\le 1$ and $G_2(x)=G(x)$. Then,
$$
\int_{-1}^{1}\frac{G_1(y)G(x)}{1+x^2y^2}dy=\frac{G(x)}{2}\int_{-1}^{1}\left(\frac{1}{1+x^2y^2}+\frac{y+\frac{1}{\pi}\sin(\pi y)}{1+x^2y^2}\right)dy=G(x)\frac{\tan^{-1}x}{x},
$$
and integrating in the variable $x$ and using Theorem \ref{th1}, we obtain (\ref{eq2}).
\end{proof}\\

Using expression (\ref{eq2}), we can obtain the following new representations of the Catalan's constant,
\begin{eqnarray}
  {\bf G} &=& \int_{-1}^{1}\frac{(1+x)\tan^{-1}x}{2x}\,dx, \label{IntegraG1}\\
  {\bf G} &=& \int_{-1}^{1}\frac{\left(\frac{1}{2}+\frac{1}{\pi}\tan^{-1}x\right)\tan^{-1}x}{x}\,dx,\label{IntegraG2} \\
  {\bf G} &=& \int_{-1}^{1}\frac{\left(\frac{\pi}{2}+\sin^{-1}x\right)\tan^{-1}(x)}{\pi x}\,dx, \label{IntegraG3}\\
  {\bf G} &=& \int_{-1}^{1}\frac{\left(1+{\mbox{erf}}\left(\frac{x}{\sqrt{2}}\right) \right)\tan^{-1}(x)}{2x}\,dx.\label{IntegraG4}
\end{eqnarray}\\
The chosen $G$ functions have been, $G(x)=\frac{x+1}{2}$, $-1\le x\le 1$ for (\ref{IntegraG1}); $G(x)=\frac{1}{2}+\frac{1}{\pi}\tan^{-1}x$ for (\ref{IntegraG2}), corresponding to the cdf of the Cauchy distribution; $G(x)=\frac{1}{\pi}\left(\frac{\pi}{2}+\sin^{-1}x\right)$ in (\ref{IntegraG3}), corresponding to the arcsin distributions and $G(x)=\frac{1}{2}\left(1+{\mbox{erf}}\left(\frac{x}{\sqrt{2}}\right)\right)$, where $\mbox{erf}(x)$ is the error function, for obtaining (\ref{IntegraG4}).

\section{General representation of the Catalan constant by means of multiple integrals}\label{section4}

A more general result is given now than the one provided in Theorem \ref{th1}.
\begin{theorem}\label{th2}
Let $G_i:\mathbb{R}\to [0,1]$, $i=1,2,\dots,r$ be $r$ right-continuous monotone non-decreasing functions satisfying $\lim_{x\to -\infty}G_i(x)=0$ and $\lim_{x\to\infty}G_i(x)=1$, $r=1,2,\dots$ If $G_i(x_i)=1-G_i(-x_i)$ and $a_i>0$, $i=1,2,\dots,r$, with $a_r=\left\{\prod_{i=1}^{r-1}a_i\right\}^{-1}$ and being $\prod_{i=1}^{0}a_i=1$, we have,
\begin{equation}\label{eq3}
{\bf G}=2^{r-2}\int_{-a_1}^{a_1}\int_{-a_2}^{a_2}\cdots\int_{-a_r}^{a_r}\Phi\left(-\prod_{i=1}^{r}x_i^2,2-r,\frac{1}{2}\right)
\prod_{i=1}^{r}G_i(x_i)\,dx_i,
\end{equation}
where
\begin{equation}\label{lerch}
\Phi(z,s,a)=\sum_{k=0}^{\infty}\frac{z^k}{(k+a)^{s}},
\end{equation}
is the Lerch transcendent function, with $a>0$, with $|z|<1$ or ${\cal R}(s)>1$ and $|z|=1$, and ${\bf G}$ is the Catalan's constant.
\end{theorem}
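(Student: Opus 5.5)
The plan is to follow the proof of Theorem \ref{th1}, now in $r$ variables. The first step is the one-dimensional moment identity already obtained there. For an even power and any $G$ satisfying the symmetry condition, we have
$$\int_{-a}^{a}x^{2k}G(x)\,dx=\frac{a^{2k+1}}{2k+1}.$$
The cleanest justification pairs $x$ with $-x$: the integrand becomes $x^{2k}[G(x)+G(-x)]/2=x^{2k}/2$, so no differentiability of $G$ is needed. Since the integrand factorizes, Fubini over the box $\prod_i[-a_i,a_i]$ then gives
$$\int_{-a_1}^{a_1}\cdots\int_{-a_r}^{a_r}\Big(\prod_{i=1}^r x_i^2\Big)^k\prod_{i=1}^rG_i(x_i)\,dx_i=\frac{\big(\prod_{i=1}^r a_i\big)^{2k+1}}{(2k+1)^r}=\frac{1}{(2k+1)^r},$$
where the last equality uses the normalization $a_r=(\prod_{i<r}a_i)^{-1}$, i.e.\ $\prod_{i=1}^r a_i=1$.

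Next I rewrite the target series so that it matches the Lerch transcendent. Write $\frac{1}{(2k+1)^2}=\frac{(2k+1)^{r-2}}{(2k+1)^r}$ and $(2k+1)^{r-2}=2^{r-2}(k+\tfrac12)^{r-2}$. Then
$$\sum_{k\ge0}\frac{(-1)^k}{(2k+1)^2}=2^{r-2}\sum_{k\ge0}\frac{(-1)^k}{(k+\frac12)^{2-r}}\cdot\frac{1}{(2k+1)^r}.$$
Replace each factor $1/(2k+1)^r$ by the multiple integral above and interchange the sum with the integral. The integrand becomes
$$2^{r-2}\sum_k\frac{(-z)^k}{(k+1/2)^{2-r}}\prod_i G_i(x_i)=2^{r-2}\,\Phi\big(-z,2-r,\tfrac12\big)\prod_i G_i(x_i),\qquad z=\prod_i x_i^2,$$
which is exactly (\ref{eq3}). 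As a sanity check, $r=2$ gives $\Phi(-z,0,\frac12)=1/(1+z)$ and recovers Theorem \ref{th1}.

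The main obstacle is justifying the interchange of sum and integral, and making sense of $\Phi$ on the boundary of the box. On the box, $0\le z\le 1$, and $z=1$ only on a set of measure zero (the corners where every $|x_i|=a_i$). For $z<1$ the series converges absolutely, since the term $k^{r-2}z^k$ decays geometrically. I would handle the interchange by Abel summation. Insert a factor $t^k$ with $0<t<1$; for such $t$, termwise integration is justified by dominated convergence, because $\sum_k k^{r-2}t^k<\infty$ and the $G_i$ are bounded. Then let $t\to1^-$. On the series side, Abel's theorem applies since $\sum(-1)^k/(2k+1)^2$ converges, so the left side tends to $\mathbf G$.

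The integral side is the hard step. For $r\ge3$ the Lerch function with nonpositive $s=2-r$ is a rational function of $z$ with denominator $(1+z)^{r-1}$. It therefore extends continuously and boundedly to $z\in[0,1]$, and the Abel means $\Phi(-tz,2-r,\frac12)$ are uniformly bounded on $[0,1]$ and converge pointwise as $t\to1^-$. I would verify this through the identity $\Phi(-z,-m,\frac12)=(z\,\tfrac{d}{dz}+\tfrac12)^m\frac{1}{1+z}$ with $m=r-2$, which gives a polynomial over $(1+z)^{m+1}$ with no pole on $[0,1]$. Dominated convergence then gives the limit of the integral side. The statement should accordingly be read with $\Phi$ understood as this analytic continuation at the boundary, since the series with $s=2-r\le0$ does not converge at $|z|=1$ itself.
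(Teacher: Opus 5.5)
Your proof is correct and follows the same route as the paper. The steps match: the coordinatewise moment identity $\int_{-a}^{a}x^{2k}G(x)\,dx=a^{2k+1}/(2k+1)$, multiplication by $(-1)^k(2k+1)^{r-2}$, termwise summation, and recognition of the series $2^{r-2}\Phi\bigl(-\prod_i x_i^2,2-r,1/2\bigr)$. What you add is rigor the paper skips: your $x\mapsto -x$ pairing avoids the paper's integration by parts, which tacitly assumes $G'$ exists, and your Abel-summation plus dominated-convergence argument justifies the sum--integral interchange that the paper calls ``straightforward computations.'' That argument rests on uniform bounds from the rational form of $\Phi(-w,2-r,1/2)$ for $r\ge 2$, or from $\Phi(-w,1,1/2)=2\tan^{-1}\!\sqrt{w}/\sqrt{w}\le 2$ for $r=1$, together with the fact that the $2^r$ corners where $\prod_i x_i^2=1$ form a null set.
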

\begin{proof} We have that
\begin{equation}\label{cat1}
\int_{-a_1}^{a_1}\cdots\int_{-a_r}^{a_r}\left(\prod_{i=1}^{r}x_i\right)^{2n}\prod_{i=1}^{r}G_i(x_i)dx_i=\frac{\left(\prod_{i=1}^{r}a_i\right)^{2n+1}}{(2n+1)^r}.
\end{equation}

Now, by multiplying on both sides of (\ref{cat1}) by $(-1)^n (2n+1)^{r-2}$, $n=0,1,\dots$ and summing, straightforward computations provide
\begin{eqnarray*}
\sum_{n=0}^{\infty}\frac{(-1)^n\left(\prod_{i=1}^{r}a_i\right)^{2n+1}}{(2n+1)^2} &=& \int_{-a_1}^{a_1}\cdots\int_{-a_r}^{a_r}\left[\sum_{n=0}^{\infty}\frac{(-1)^n } {(2n+1)^{2-r}}\left(\prod_{i=1}^{r}x_i\right)^{2n}\right]\prod_{i=1}^{r}G_i(x_i)\,dx_i\\
&=& \int_{-a_1}^{a_1}\cdots\int_{-a_r}^{a_r}\left[2^{r-2}\sum_{n=0}^{\infty}\left(n+\frac{1}{2}\right)^{r-2}\left(-\prod_{i=1}^{r}x_i^2\right)^{n}\right]
\prod_{i=1}^{r}G_i(x_i)\,dx_i\\
&=& 2^{r-2}\int_{-a_1}^{a_1}\cdots\int_{-a_r}^{a_r}\Phi\left(-\prod_{i=1}^{r}x_i^2,2-r,\frac{1}{2}\right)\prod_{i=1}^{r}G_i(x_i)\,dx_i.
\end{eqnarray*}
Finally, as $a_r=\left\{\prod_{i=1}^{r-1}a_i\right\}^{-1}$ and using (\ref{lerch}) we obtain the result.
\end{proof}

\noindent{\bf Remark:} The case $r=1$ gives expression (\ref{eq2}). For the special case $r=2$ expression (\ref{eq3}) reduces to expression (\ref{eq1}). \\

\noindent{\bf Example} In the case of $r=3$, formula (\ref{eq3}) needs to compute the Lerch's formula given by,
$$\Phi\left(-z,-1,\frac{1}{2}\right)=\frac{1-z}{2(1+z)^2},$$
and we obtain the following general representation of the Catalan constant by means of the triple integral,

\begin{eqnarray*}
{\bf G}=\int_{-a_1}^{a_1}\int_{-a_2}^{a_2}\int_{-1/(a_1 a_2)}^{1/(a_1 a_2)} \frac{(1-x_1^2x_2^2x_3^2)G_1(x_1)G_2(x_2)G_3(x_3)}{(1+x_1^2x_2^2x_3^2)^2}\,dx_1\,dx_2\,dx_3,
\end{eqnarray*}\\
where $G_i(z)$ are defined in Theorem \ref{th1} and \ref{th2}. In the Appendix, we include the expressions of the Lerch function given in (\ref{eq3}), for $r=3,4\dots,10$

\subsection{Catalan multiple integral representations in dimensions higher than two}

In this section, we will obtain some simple and relevant representations of the Catalan's constant in dimensions greater than two, where the integrand corresponds to polynomial rational functions. We will use Theorem \ref{th2}, taking again the Rademacher distribution function, defined as $G_i(x_i)=\frac{1}{2}$ if $-1\le x_i< 1$, 0 if $x_i< -1$
and 1 if $x_i\ge 1$, $i=1,\dots,r$ and $a_i=1$. We use the notation $\bm x=(x_1,\dots,x_r)$ and $d\bm x=(dx_1,\dots,dx_r)$ and we represent,
\begin{equation}\label{zi}
x_{(i)}=\prod_{j=1}^{i}x_j^2,\;i=1,2,\dots,r.
\end{equation}

Then, in dimension $r=3$
we have the formula,
\begin{eqnarray*}\label{dim3}
{\bf G}=\frac{1}{2^3}\int_{[-1,1]^3}\frac{1-x_{(3)}}{(1+x_{(3)})^2}d\bm x=
\int_{[0,1]^3}\frac{1-x_{(3)}}{(1+x_{(3)})^2}\,d\bm x,
\end{eqnarray*}
where $x_{(3)}=(x_1x_2x_3)^2$ according to (\ref{zi}). The second integral with limits in the interval $[0,1]$
is obtained by symmetry. Now, for dimension $r=4$
\begin{eqnarray*}\label{dim4}
{\bf G}=\frac{1}{2^4}\int_{[-1,1]^4}\frac{1-6x_{(4)}+x^2_{(4)}}{(1+x_{(4)})^3}\,d\bm x=
\int_{[0,1]^4}\frac{1-6x_{(4)}+x^2_{(4)}}{(1+x_{(4)})^3}\,d\bm x,
\end{eqnarray*}
where $x_{(4)}$ is defined in (\ref{zi}).

The formula for dimension $r=5$ is,
\begin{eqnarray*}\label{dim5}
{\bf G}=\frac{1}{2^5}\int_{[-1,1]^5}\frac{1-23x_{(5)}+23x_{(5)}-x^2_{(5)}}{(1+x_{(5)})^4}d\bm x=\int_{[0,1]^5}\frac{1-23x_{(5)}+23x_{(5)}-x^2_{(5)}}{(1+x_{(5)})^4}\,d\bm x,
\end{eqnarray*}
and in dimension $r=6$
\begin{eqnarray*}
{\bf G} &=& \frac{1}{2^6}\int_{[-1,1]^6}\frac{1-76x_{(6)}+230x^2_{(6)}-76x^3_{(6)}+x^4_{(6)}}{(1+x_{(6)})^5}\,d\bm x \label{dim6}\\
   &=& \int_{[0,1]^6}\frac{1-76x_{(6)}+230x^2_{(6)}-76x^3_{(6)}+x^4_{(6)}}{(1+x_{(6)})^5}\,d\bm x,
\end{eqnarray*}
where $x_{(5)}$ and $x_{(6)}$ are defined in (\ref{zi}).\\

Finally, the representation of the Catalan constant by means of an integral of dimension $r=10$ is,
\begin{eqnarray*}
{\bf G} = \frac{1}{2^{10}}\int_{[-1,1]^{10}}\frac{1+\sum_{j=1}^{8}a_jx_{(10)}^j}{(1+x_{(10)})^5}\,d\bm x = \int_{[0,1]^{10}}\frac{1+\sum_{j=1}^{8}a_jx_{(10)}^j}{(1+x_{(10)})^5}\,d\bm x
\end{eqnarray*}
where,
$a_1=-6552$, $a_2=331612$, $a_3=-2485288$, $a_4=4675014$, and $a_r=a_{9-r}$, $r=1,\dots,8$, and $x_{)10)}$ is again defined in (\ref{zi}).

\section*{Conflict of Interest}

The authors of this paper declare that they have no conflicts of interest.

\section*{Appendix: expressions of the Lerch function in Theorem \ref{th2}}

In this Appendix we include some expressions of the Lerch function in Theorem \ref{th2}, for values $\Phi(-z,j,\frac{1}{2})$ with $j=-1,\dots,-8$, which correspond to multiple integrals in dimensions $\mathbb{R}^n$, with $n=3,4,\dots,10$.\\

\noindent $r=3$
$$\Phi\left(-z,-1,\frac{1}{2}\right)=\frac{1-z}{2(1+z)^2}.$$

\noindent $r=4$
$$\Phi\left(-z,-2,\frac{1}{2}\right)=\frac{1-6z+z^2}{4(1+z)^3}.$$

\noindent $r=5$
$$\Phi\left(-z,-3,\frac{1}{2}\right)=\frac{1-23z+23z^2-z^3}{8(1+z)^4}.$$

\noindent $r=6$
$$\Phi\left(-z,-4,\frac{1}{2}\right)=\frac{1-76z+230z^2-76z^3+z^4}{16(1+z)^5}.$$

\noindent $r=7$
$$\Phi\left(-z,-5,\frac{1}{2}\right)=\frac{-z^5+237 z^4-1682 z^3+1682 z^2-237 z+1}{32 (z+1)^6}.$$

\noindent $r=8$
$$\Phi\left(-z,-6,\frac{1}{2}\right)=\frac{z^6-722 z^5+10543 z^4-23548 z^3+10543 z^2-722 z+1}{64 (z+1)^7}.$$

\noindent $r=9$
$$\Phi\left(-z,-7,\frac{1}{2}\right)=\frac{-z^7+2179 z^6-60657 z^5+259723 z^4-259723 z^3+60657 z^2-2179 z+1}{128 (z+1)^8}.$$

\noindent $r=10${\small
$$\Phi\left(-z,-8,\frac{1}{2}\right)=\frac{(1 - 6552 z + 331612 z^2 - 2485288 z^3 + 4675014 z^4 - 2485288 z^5 +
 331612 z^6 - 6552 z^7 + z^8)}{256 (1 + z)^9}.$$}

\end{document}